\numberwithin{equation}{section}
\newtheorem{teo}{Theorem}[section]
\newtheorem{lemma}[teo]{Lemma}
\newtheorem{re}[teo]{Remark}}
\DeclareMathOperator{\EE}{\mathbb{E}}
\DeclareMathOperator{\PP}{\mathbb{P}}
 \DeclareMathOperator{\Cov}{Cov}
\newcommand{\ind}{\mathbf{1}}
\newcommand{\complex}{\mathbb{C}}
\newcommand{\real}{\mathbb{R}}
\newcommand{\integer}{\mathbb{Z}}
\renewcommand{\natural}{\mathbb{N}}
\newcommand{\as}{\emph{a.s. }}
\newcommand{\ie}{\emph{i.e. }}
\newcommand{\iid}{\emph{i.i.d. }}
\renewcommand{\epsilon}{\varepsilon}
 \title{\bf Propagation of Chaos for a Balls into Bins Model} 
\author{Nicoletta Cancrini\footnote{nicoletta.cancrini@univaq.it, DIIIE Università dell'Aquila, L'Aquila Italy.}
  \ and Gustavo Posta\footnote{gustavo.posta@uniroma1.it, Dipartimento di Matematica ``G. Castelnuovo'', Sapienza Università di Roma, Roma Italy.}
}
\begin{document}

\noindent

\maketitle
\thispagestyle{empty}

\begin{abstract}
  Consider a finite number of balls initially placed in $L$ bins.
  At each time step a ball is taken from each non-empty bin.
  Then all the balls are uniformly reassigned into bins.
  This finite Markov chain is called Repeated Balls-into-Bins process and is a discrete time interacting particle system with parallel updating.
  We prove that, starting from a suitable (\emph{chaotic}) set of initial states, as $L\to+\infty$, the numbers of balls in each bin becomes independent from the rest of the system \ie we have \emph{propagation of chaos}.
  We furthermore study some equilibrium properties of the limiting \emph{nonlinear process}.
\end{abstract}

\section{Introduction}
\label{sec:intro}

We consider $N$ balls and $L$ bins.
Initially the balls are placed into bins in an arbitrary way.
At each time step a ball is taken from each non-empty bin.
Then all the balls are uniformly reassigned into bins. 

The random evolution of the number of balls in each bin is an ergodic finite state Markov chain called the \emph{Repeated Balls-into-Bins}  (RBB) process \cite{Be:Cl:Na:Pa:Po}.
This system is a conservative interacting particles system in discrete time with parallel updates.
We can think to the RBB process as a zero-range process \cite{Sp} on the complete graph with constant jump rates and parallel updates but, because of the parallel updating, it is not reversible.
For this reason its invariant measure is difficult to compute and, to our knowledge, unknown.

The RBB process appears naturally in different applicative contexts. 
For example we can think to balls in every bin as customers in a queue.
Customers are served at discrete times and each served customer is reassigned to a random queue.
In this setting the RBB process is a discrete time \emph{closed Jackson network} \cite{Ja,Ke}.
The parallel updating is justified \cite{Be:Cl:Na:Pa:Po} by thinking to customers as tasks (or \emph{tokens}) in a network of parallel CPU which are reassigned at every round.

In this paper we are interested in the behavior of the RBB process for large $L$.
We prove that starting from a symmetric initial distribution where the number of balls in each bin becomes independent from the rest of the system as $L\to+\infty$, these properties are preserved for any finite time.
This phenomenon is called \emph{propagation of chaos} \cite{Ka,Sz}.
The limiting evolution of the system is described by a \emph{nonlinear Markov process}.
The interesting fact, citing \cite{Sz}, is that ``the study of every individual gives information on the behavior of the group''.
The price to pay for this simplification is that the limiting process evolves accordingly to a nonlinear equation. 

Propagation of chaos is a largely studied topic in literature, see for example \cite{Sz} and references therein for an introduction.
In recent years, motivated by several applications, the interest focused on models with parallel jumps updating, see for example \cite{An:DP:Fi} and references therein.
Propagation of chaos for these models is not obvious, as parallel jumps may interfere with asymptotic independence.
In particular in \cite{An:DP:Fi} the authors prove propagation of chaos for a wide class of models with simultaneous jumps.
However, due to a different simultaneous jump mechanism, the RBB is not contained in this class and a different approach is needed.

The paper is organized as follows.
In Section~\ref{sec:con} we define the RBB process and the nonlinear process.
Furthermore we study some of the equilibrium properties of the nonlinear process.
In Section~\ref{sec:cp} we prove propagation of chaos for the RBB process.

\section{Construction and basic properties}
\label{sec:con}

In this section we first introduce the RBB process next we define the nonlinear process and the \emph{M/D/1 queue process}.
The nonlinear process is involved in the proof of propagation chaos for the RBB process while the M/D/1 queue process in the study of the equilibrium properties of the nonlinear process.

\subsection{The Repeated Balls-into-Bins process}
\label{sec:bib}

Consider a finite numbers of balls initially placed in an arbitrary way in $L\in\natural$ bins. 
At each time step $t=1,2,\dots$ a ball is taken from each non-empty bin.
Then each ball is uniformly reassigned into bins.
The total number of balls is thus conserved.

More precisely for any $\eta\in\integer_+^L$ define $w(\eta):=(\ind(\eta_1>0),\dots,\ind(\eta_L>0))$ and $L\bar w_L:=\sum_j\ind(\eta_j>0)$.
The RBB process $\eta^L=(\eta^L(t))_{t\geq0}$ has state space $\integer_+^L$ and transition matrix $P_L$ given on functions $f:\integer_+^L\to\complex$ by
\begin{equation*}
  (P_L f)(\eta):=
  \frac{1}{L^{L\bar w_L}}\sum_{\sigma\in\integer_+^L}\binom{L\bar w_L}{\sigma}f(\eta-w+\sigma).
\end{equation*}
So if $\eta^L(t)=\eta$, then
\begin{equation}
  \label{eq:rbb}
  \eta^L(t+1):=\eta-w(\eta)+B(\eta),
\end{equation}
where $B(\eta)$ is an $L$-dimensional multinomial random variable of parameters $L\bar w_L(\eta)$ and $(1/L,\dots,1/L)$.

As the RBB process preserves the number of particles it is clear that it is a finite state Markov chain aperiodic and irreducible so it is ergodic.
Observe that while the transition $(1,\dots,1)\mapsto(L,0,\dots,0)$ is allowed the reverse is not.
This implies that the RBB process is not reversible.
As far as we know an explicit formula for the invariant measure is unknown, as usually happens for non-reversible Markov chains.
Irreversibility comes from the parallel updating mechanism.
For the same model with sequential updating the invariant measure is the uniform (Bose-Einstein) distribution.
For the RBB process this is not the case.
For example if $N=L=3$ the invariant measure puts a mass of $4/21$ on the configuration with one particle in each site, $1/21$ on the 3 configurations with three particles on a single site and $1/9$ on the remaining 6 configurations.

\subsection{The Nonlinear and the M/D/1 queue processes}
\label{sec:nlp}

The nonlinear process $(\eta(t))_{t\in\integer_+}$ takes values in $\integer_+$ and can be defined in the following way.
Let $\eta(0)$ an initial random state, $\rho(0):=\PP(\eta(0)>0)$ and $N_1$ an independent Poisson random variable with expected value $\rho(0)$ (a Poisson random variable with expected value 0 being defined as the 0 random variable).
Then define the new state $\eta(1)$ as:
\begin{equation*}
  \eta(1):=
  \eta(0)-\ind(\eta(0)>0)+N_1.
\end{equation*}
Now iterate the above construction so that, if $\eta(t)$ is the state at time $t>0$, define $\rho(t):=\PP(\eta(t)>0)$ and $N_{t+1}$ an independent Poisson random variable with expected value $\rho(t)$.
Then the new state $\eta(t+1)$ is:
\begin{equation}
  \label{eq:nl1}
  \eta(t+1):=
  \eta(t)-\ind(\eta(t)>0)+N_{t+1}.
\end{equation}

The nonlinear process can be started from any distribution.
However if the initial distribution has finite expectation this property is conserved.

\begin{lemma}
  \label{le:r}
  Assume $\EE(\eta(0))=r\in[0,+\infty]$.
  Then $\EE(\eta(t))=r$, $\forall t\geq0$.
\end{lemma}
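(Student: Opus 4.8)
The plan is a one-step induction on $t$, obtained by taking expectations directly in the recursion \eqref{eq:nl1}. The base case $t=0$ is the hypothesis $\EE(\eta(0))=r$. For the inductive step I would assume $\EE(\eta(t))=r$ and analyze $\eta(t+1)=\eta(t)-\ind(\eta(t)>0)+N_{t+1}$.

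First I treat the finite case $r<+\infty$. Here $\ind(\eta(t)>0)$ is bounded by $1$ and $N_{t+1}$ has finite mean, so linearity of expectation applies and yields
\begin{equation*}
  \EE(\eta(t+1))=\EE(\eta(t))-\EE(\ind(\eta(t)>0))+\EE(N_{t+1}).
\end{equation*}
By definition $\EE(\ind(\eta(t)>0))=\PP(\eta(t)>0)=\rho(t)$, and since $N_{t+1}$ is Poisson of parameter $\rho(t)$ we also have $\EE(N_{t+1})=\rho(t)$ (the convention that parameter $0$ gives the zero variable is consistent with this). The two correction terms cancel, so $\EE(\eta(t+1))=\EE(\eta(t))=r$, completing the induction. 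The whole point is precisely this cancellation: on average one ball is removed from an occupied site (contributing $-\rho(t)$) and exactly $\rho(t)$ balls arrive, so the mean is conserved.

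For the remaining case $r=+\infty$ the same cancellation is not needed; instead I would use the pathwise bound $\eta(t+1)\geq\eta(t)-1$, immediate from $\ind(\eta(t)>0)\leq1$ and $N_{t+1}\geq0$. Monotonicity of expectation then gives $\EE(\eta(t+1))\geq\EE(\eta(t))-1=+\infty$, so the mean stays infinite and the identity $\EE(\eta(t))=r$ is preserved. I do not expect any genuine obstacle here: the only points deserving a little care are keeping the two cases separate and noting that $\rho(t)\in[0,1]$ is always well defined, so that $N_{t+1}$ has finite mean at every step regardless of the value of $r$.
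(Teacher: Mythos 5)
Your proof is correct and follows essentially the same route as the paper: an induction in which the removal term $-\PP(\eta(t)>0)$ cancels against the Poisson mean $\rho(t)$, with the infinite-mean case handled separately. Your treatment of $r=+\infty$ via the pathwise bound $\eta(t+1)\geq\eta(t)-1$ is a slightly more explicit justification of the paper's one-line remark that one is adding a finite-mean variable to an infinite-mean one, but it is the same idea.
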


\begin{proof}
  The proof is obtained by induction.
  Assume first that $r<+\infty$.
  By equation \eqref{eq:nl1}:
  \begin{equation*}
    \EE\left(\eta(t+1)|\eta(t)\right)
    =\eta(t)-\ind(\eta(t)>0)+\rho(t).
  \end{equation*}
  Then
    \begin{equation*}
      \EE\left(\eta(t+1)\right)=
    \EE\left[\EE\left(\eta(t+1)|\eta(t)\right)\right]=
    \EE(\eta(t))-\PP(\eta(t)>0)+\rho(t)
    =r.
  \end{equation*}
  When $r=+\infty$, again by \eqref{eq:nl1}, we have that $\eta(t+1)$ is obtained by adding a finite mean random variable to a infinite mean one.
\end{proof}

The M/D/1 queue with arrival rate $\rho\geq0$ \cite{Er,Na} is a Markov chain related to the nonlinear process.
It can be defined as follows.
Fix an initial state $\zeta_0\in\integer_+$ and consider a sequence $(M_t)_{t\in\natural}$ of \iid Poisson random variables with expected value $\rho$.
Then $(\zeta(t))_{t\in\integer_+}$ is recursively defined for $t\geq0$:
\begin{equation}
  \label{eq:md1}
  \zeta(t+1):=
  \zeta(t)-\ind(\zeta(t)>0)+M_{t+1}.
\end{equation}

\noindent
Some of the main properties of this chain are given in the next result.
\begin{teo}
  \label{teo:md1}
  The M/D/1 queue with arrival rate $\rho\geq0$ is an irreducible aperiodic Markov chain.
  It is transient for $ \rho>1$, is null persistent for $ \rho=1$ and is positive persistent for $ \rho<1$.
  In the latter case its invariant measure $\pi_\rho$ has characteristic function
  \begin{equation}
    \label{eq:cf1}
      \hat\pi_\rho(x)
      =\frac{(1-\rho)(e^{i x}-1)\exp\left\{\rho (e^{ix}-1)\right\}}{e^{ix}-\exp\left\{\rho (e^{ix}-1)\right\}},
      \qquad\qquad
      x\in\real.
  \end{equation}
  Furthermore if $\rho_r:=1+r-\sqrt{1+r^2}$, $r\geq0$, then $\sum_kk\pi_{\rho_r}(\{k\})=r$.
    \end{teo}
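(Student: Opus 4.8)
The plan is to dispatch the four assertions in turn, the common engine being the reformulation of \eqref{eq:md1} as the reflected random walk
\[
  \zeta(t+1)=(\zeta(t)-1)^{+}+M_{t+1}
\]
on $\integer_+$, whose increment on $\{\zeta>0\}$ has the law of $M-1$ with $M$ Poisson of mean $\rho$. For $\rho>0$ the variables $M_t$ have full support on $\integer_+$, so from the empty state one reaches every state in a single step, $\PP(0\to0)=\euler^{-\rho}>0$, and descending one unit at a time (taking $M_t=0$) reaches $0$ from any state; this gives irreducibility and aperiodicity simultaneously. The degenerate case $\rho=0$ is trivial, since the chain is then absorbed at $0$, so $\pi_0=\delta_0$, whose characteristic function is the constant $1$, matching \eqref{eq:cf1} at $\rho=0$.

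For the classification I would apply the Foster--Lyapunov drift criterion with the linear test function $V(\zeta)=\zeta$: on $\{\zeta\geq1\}$ the one-step mean drift is exactly $\EE(M)-1=\rho-1$. When $\rho<1$ this drift is negative and bounded away from $0$ off the finite set $\{0\}$, yielding positive persistence. When $\rho>1$ the walk has positive drift while it stays positive, so before any return to $0$ one has $\zeta(t)=\zeta(0)+\sum_{s\leq t}(M_s-1)\to+\infty$ almost surely by the strong law; hence with positive probability the chain never returns to $0$ and is transient. The critical case $\rho=1$ is the \emph{delicate} one: the increments have mean zero, so recurrence follows from the standard recurrence theorem for mean-zero integer-valued random walks (or a logarithmic Lyapunov function), while positive recurrence is excluded because the stationary generating function computed below forces $\pi_0=1-\rho=0$, impossible for a genuine probability measure on $\integer_+$; thus the chain is null persistent.

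Given positive persistence for $\rho<1$, the stationary law $\pi_\rho$ exists and its generating function $\Pi(z):=\EE(z^{\zeta_\infty})$ satisfies the fixed-point identity coming from $\zeta_\infty\stackrel{d}{=}(\zeta_\infty-1)^{+}+M$ with $M$ an independent Poisson of mean $\rho$. Using $\EE\bigl(z^{(\zeta_\infty-1)^{+}}\bigr)=\pi_0+z^{-1}(\Pi(z)-\pi_0)$, $\EE(z^{M})=\exp\{\rho(z-1)\}$ and independence, then clearing the factor $z$, I obtain
\[
  \Pi(z)\,\bigl[\,z-\exp\{\rho(z-1)\}\,\bigr]=\pi_0\,(z-1)\exp\{\rho(z-1)\},
\]
hence $\Pi(z)=\pi_0(z-1)\exp\{\rho(z-1)\}/\bigl(z-\exp\{\rho(z-1)\}\bigr)$. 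The right-hand side is a $0/0$ limit at $z=1$, and a single application of l'H\^opital's rule together with the normalisation $\Pi(1)=1$ forces $\pi_0=1-\rho$. Substituting $z=\euler^{ix}$ then returns exactly \eqref{eq:cf1}.

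Finally, for the mean I would expand $\Pi$ to first order at $z=1$: writing $u=z-1$ one finds $\Pi(z)=1+\bigl(\rho+\tfrac{\rho^2}{2(1-\rho)}\bigr)u+O(u^2)$, so $\EE(\zeta_\infty)=\Pi'(1)=\rho+\tfrac{\rho^2}{2(1-\rho)}$, the Pollaczek--Khinchine mean. Imposing $\EE(\zeta_\infty)=r$ yields the quadratic $\rho^2-2(1+r)\rho+2r=0$ with roots $1+r\pm\sqrt{1+r^2}$; the admissible root below $1$ is $\rho_r=1+r-\sqrt{1+r^2}$, and conversely substituting $\rho=\rho_r$ gives $\sum_k k\,\pi_{\rho_r}(\{k\})=r$. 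I expect the genuine obstacle to be the critical case $\rho=1$: pinning down recurrence there (as opposed to mere non-transience) requires either the random-walk recurrence theorem or a carefully tuned non-linear Lyapunov function, after which the nonexistence of a stationary law rules out positive persistence.
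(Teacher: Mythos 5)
Your proof is correct, but it is genuinely different in character from the paper's: the authors dispose of this theorem in two lines, quoting equation $(2.3)$ of the Nakagawa reference \cite{Na} for the characteristic function \eqref{eq:cf1} and declaring the recurrence/transience classification and the mean identity to be ``standard Markov chains considerations.'' You instead supply a self-contained derivation: Foster--Lyapunov with $V(\zeta)=\zeta$ for positive persistence when $\rho<1$, domination of the reflected walk by the free mean-$(\rho-1)$ walk for transience when $\rho>1$, Chung--Fuchs recurrence plus the forced value $\pi_0=1-\rho=0$ to settle the critical case, and the fixed-point identity $\zeta_\infty\stackrel{d}{=}(\zeta_\infty-1)^{+}+M$ to obtain $\Pi(z)$, recovering \eqref{eq:cf1} at $z=\euler^{ix}$ and the Pollaczek--Khinchine mean $\rho+\rho^2/(2(1-\rho))$, whose inversion gives exactly the quadratic $\rho^2-2(1+r)\rho+2r=0$ and the admissible root $\rho_r$. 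What your route buys is independence from the external reference and an explicit verification of the final claim $\sum_k k\,\pi_{\rho_r}(\{k\})=r$, which the paper never spells out (it reappears implicitly in the Remark as $-\tilde\rho^2+2\tilde\rho(r+1)-2r=0$); what it costs is length and reliance on two classical facts you should cite cleanly (the positive-drift transience step is most economically phrased as $\zeta(t)\geq\zeta(0)+\sum_{s\leq t}(M_s-1)\to+\infty$ a.s., so $0$ is visited finitely often, rather than via ``positive probability of no return''). The only blemish worth flagging is shared with the theorem statement itself: for $\rho=0$ the chain is not irreducible on $\integer_+$, a degenerate case you correctly set aside.
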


    \begin{proof}
      Equation \eqref{eq:cf1} is equation $(2.3)$ obtained in \cite{Na}.
      The others properties follow by standard Markov chains considerations.
    \end{proof}

The invariant measure of the M/D/1 queue (see equation $(2.4)$ in \cite{Na} for an explicit formula) can be used as a starting distribution of the nonlinear process.
In this case the nonlinear process becomes the M/D/1 queue at equilibrium. 
More precisely we have the following lemma.

\begin{lemma}
  The nonlinear process starting from $\pi_\rho$ is the M/D/1 queue with arrival rate $\rho$.
  Furthermore it is stationary if and only if $\eta(0)\sim\pi_\rho$.
\end{lemma}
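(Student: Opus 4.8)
The plan is to reduce both assertions to the single identity $\PP(\eta>0)=\rho$ for $\eta\sim\pi_\rho$, which is exactly what makes the state-dependent Poisson intensity $\rho(t)$ of the nonlinear process \eqref{eq:nl1} coincide with the fixed arrival rate $\rho$ of the M/D/1 queue \eqref{eq:md1}. To get this identity I would use conservation of mass rather than the explicit formula \eqref{eq:cf1}. Since $\rho<1$, Theorem~\ref{teo:md1} guarantees that $\pi_\rho$ has finite mean; taking expectations in \eqref{eq:md1} under the stationary law $\pi_\rho$, so that $\EE(\zeta(t+1))=\EE(\zeta(t))$, gives $0=-\PP_{\pi_\rho}(\zeta>0)+\rho$, whence $\PP_{\pi_\rho}(\eta>0)=\rho$ and $\pi_\rho(\{0\})=1-\rho$. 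Alternatively, passing from \eqref{eq:cf1} to the probability generating function via $e^{ix}\mapsto z$ and evaluating at $z=0$ yields $\pi_\rho(\{0\})=1-\rho$ directly.

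For the first assertion I would show by induction that, when $\eta(0)\sim\pi_\rho$, one has $\eta(t)\sim\pi_\rho$ and hence $\rho(t)=\rho$ for every $t\geq0$. The base case is the identity just established. For the inductive step, if $\eta(t)\sim\pi_\rho$ then $\rho(t)=\PP(\eta(t)>0)=\rho$, so the innovation $N_{t+1}$ in \eqref{eq:nl1} is Poisson of mean $\rho$ and the update $\eta(t)\mapsto\eta(t)-\ind(\eta(t)>0)+N_{t+1}$ is precisely one step of the M/D/1 kernel of rate $\rho$; since $\pi_\rho$ is invariant for that kernel (Theorem~\ref{teo:md1}), it follows that $\eta(t+1)\sim\pi_\rho$ and $\rho(t+1)=\rho$. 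Because $\rho(t)\equiv\rho$, the sequence $(N_t)$ is i.i.d.\ Poisson$(\rho)$ and the recursions \eqref{eq:nl1} and \eqref{eq:md1} coincide; coupling the two processes through the same innovations and the same initial state gives identical trajectories, so the nonlinear process started from $\pi_\rho$ is the M/D/1 queue with rate $\rho$.

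For the second assertion, the ``if'' direction is immediate from the first, as the nonlinear process started from $\pi_\rho$ is then M/D/1 started from its invariant law, hence stationary. For ``only if'', suppose the nonlinear process is stationary with common law $\mu$ and set $\rho:=\PP_\mu(\eta>0)\in[0,1]$. At every step the innovation is Poisson$(\rho)$ with this fixed $\rho$, so the dynamics agree with the M/D/1 kernel of rate $\rho$, and stationarity of $\mu$ is exactly the statement that $\mu$ is an invariant probability for that kernel. By Theorem~\ref{teo:md1} such an invariant probability exists only for $\rho<1$ (at $\rho=1$ the chain is null persistent, so none exists, and $\rho>1$ cannot occur as $\rho$ is a probability), and it is then unique and equal to $\pi_\rho$. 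Thus $\rho=1$ is ruled out and $\mu=\pi_\rho$ with $\rho<1$; the opening identity $\PP_{\pi_\rho}(\eta>0)=\rho$ confirms the self-consistency of the parameter.

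The main obstacle is the opening identity $\PP_{\pi_\rho}(\eta>0)=\rho$: it is the only point where the specific structure of $\pi_\rho$ is used, and it is precisely what synchronizes the nonlinear intensity $\rho(t)$ with the M/D/1 arrival rate. Once it is available, both directions follow from the invariance and uniqueness already recorded in Theorem~\ref{teo:md1}.
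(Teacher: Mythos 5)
Your proof is correct and follows essentially the same route as the paper's: induction forward using the invariance of $\pi_\rho$ under the M/D/1 kernel for the first claim, and for the converse observing that stationarity forces $\rho(t)\equiv\rho(0)$, so the nonlinear process is a genuine M/D/1 queue, which is stationary iff started from its invariant law. The only difference is that you explicitly verify the synchronization identity $\PP_{\pi_\rho}(\eta>0)=\rho$ (via conservation of mass or the generating function) and rule out $\rho=1$ in the converse, two points the paper's proof leaves implicit.
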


\begin{proof}
  Assume that $\eta(0)\sim\pi_{\rho}$.
  For $t=0$ \eqref{eq:nl1} holds with a Poisson random variable $N_{t+1}$ with expected value $\rho$.
  Then equation \eqref{eq:nl1} defines the one step evolution of the M/D/1 queue with arrival rate $\rho$.
  As $\pi_\rho$ is its stationary distribution $\eta(1) \sim\pi_{\rho}$ and $\rho(1)=\PP(\eta(1)>0)=\rho$.
  Iterating this argument we obtain that $\rho(t)=\PP(\eta(t)>0)=\rho$ for any $t\geq0$ and the nonlinear process is the M/D/1 queue with arrival rate $\rho$.
  This proves also that the nonlinear process starting from $\pi_\rho$ is stationary.

  Conversely if the nonlinear process is stationary $\rho(t)=\PP(\eta(t)>0)=\rho(0)$ for any $t\geq0$.
  In this case equation \eqref{eq:nl1} becomes \eqref{eq:md1} which, defines the M/D/1 queue with arrival rate $\rho(0)$.
  As the M/D/1 queue is a Markov chain it is stationary if and only if it is started from equilibrium.
\end{proof}

Next lemma assures a uniform bound on exponential moments of the M/D/1 queue chain and will be used in the proof of Theorem~\ref{teo:st}.
\begin{lemma}
  \label{lem:ui}
    Let $(\zeta(t))_{t\in\integer_+}$ be  the M/D/1 queue with arrival rate $\rho<1$.
    Then there exist positive constants $\lambda_\rho$ and $C_\rho$, depending only on $\rho$, such that for any $\lambda\in(0,\lambda_\rho]$
    \begin{equation*}
      \EE_\zeta(e^{\lambda\zeta(t)})\leq
      C_\rho e^{\lambda\zeta},
    \end{equation*}
    for any $\zeta\in\integer_+$.
\end{lemma}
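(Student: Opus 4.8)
The plan is to run a one-step Foster--Lyapunov estimate with the exponential test function $e^{\lambda\zeta}$ and then iterate the resulting affine recursion. First I would compute the one-step conditional exponential moment. Since $M_{t+1}$ is Poisson with mean $\rho$, its moment generating function is $\exp\{\rho(e^\lambda-1)\}$, and from the recursion \eqref{eq:md1} one gets, for $\zeta(t)=n$,
\begin{equation*}
  \EE\bigl(e^{\lambda\zeta(t+1)}\mid\zeta(t)=n\bigr)
  =e^{\lambda n}\,e^{-\lambda\ind(n>0)}\exp\{\rho(e^\lambda-1)\}.
\end{equation*}
Setting $\phi(\lambda):=e^{-\lambda}\exp\{\rho(e^\lambda-1)\}$, the multiplier of $e^{\lambda n}$ equals $\phi(\lambda)$ when $n>0$ and $e^\lambda\phi(\lambda)$ when $n=0$. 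Isolating the excess at the origin and taking expectations yields the affine recursion
\begin{equation*}
  \EE_\zeta\bigl(e^{\lambda\zeta(t+1)}\bigr)
  =\phi(\lambda)\,\EE_\zeta\bigl(e^{\lambda\zeta(t)}\bigr)
  +(e^\lambda-1)\phi(\lambda)\,\PP_\zeta\bigl(\zeta(t)=0\bigr).
\end{equation*}

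The key analytic input is that $\phi(\lambda)<1$ for small positive $\lambda$. Indeed $\phi(0)=1$ while $\phi'(0)=\rho-1<0$ because $\rho<1$, so by continuity there is a threshold $\lambda_\rho>0$, depending only on $\rho$, with $\phi(\lambda)<1$ for all $\lambda\in(0,\lambda_\rho]$. Bounding the occupation probability by $\PP_\zeta(\zeta(t)=0)\leq1$ turns the recursion into the contraction $a_{t+1}\leq\phi(\lambda)a_t+(e^\lambda-1)\phi(\lambda)$ with $a_t:=\EE_\zeta(e^{\lambda\zeta(t)})$. Summing the geometric series and using $a_0=e^{\lambda\zeta}$ together with $\phi(\lambda)^t\leq1$ gives, uniformly in $t$,
\begin{equation*}
  \EE_\zeta\bigl(e^{\lambda\zeta(t)}\bigr)
  \leq e^{\lambda\zeta}
  +\frac{(e^\lambda-1)\phi(\lambda)}{1-\phi(\lambda)}.
\end{equation*}
Since $\zeta\geq0$ forces $e^{\lambda\zeta}\geq1$, the right-hand side is at most $C_\rho\,e^{\lambda\zeta}$ with $C_\rho:=1+\sup_{\lambda\in(0,\lambda_\rho]}(e^\lambda-1)\phi(\lambda)/(1-\phi(\lambda))$.

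The only real subtlety, and the step I would check carefully, is that this $C_\rho$ is genuinely finite and independent of $\lambda$: both the numerator and the denominator of the displayed ratio vanish as $\lambda\downarrow0$. A short Taylor expansion settles this, since $e^\lambda-1\sim\lambda$ and $1-\phi(\lambda)\sim(1-\rho)\lambda$, so the ratio tends to $1/(1-\rho)$ as $\lambda\downarrow0$ and, being continuous and finite on the remaining compact portion of $(0,\lambda_\rho]$, stays bounded throughout. Hence the supremum defining $C_\rho$ is finite and depends only on $\rho$, which yields the claimed uniform bound in $t$, in $\zeta$, and in $\lambda\in(0,\lambda_\rho]$.
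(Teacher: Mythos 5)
Your proof is correct and follows essentially the same route as the paper: a one-step exponential-moment (Foster--Lyapunov) estimate with test function $e^{\lambda\zeta}$, the observation that $\rho(e^{\lambda}-1)<\lambda$ for small $\lambda$ gives a contraction factor $\phi(\lambda)<1$, and iteration of the resulting affine recursion. If anything you are slightly more careful than the paper about why the constant $C_\rho$ can be taken uniform over $\lambda\in(0,\lambda_\rho]$ despite both $e^{\lambda}-1$ and $1-\phi(\lambda)$ vanishing as $\lambda\downarrow0$.
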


\begin{proof}
  Let $f(\zeta):=e^{\lambda \zeta}$ and $P$ the transition matrix of $(\zeta(t))_{t\in\integer_+}$.
  We claim that there exist $\gamma\in(0,1)$ and $C>0$, constants depending only on $\rho$, such that
  \begin{equation}
    \label{eq:gc}
    Pf(\zeta)-f(\zeta)\leq
    -\gamma f(\zeta)+C.
  \end{equation}
  Iterating we obtain
  \begin{equation*}
    P^tf(\zeta)\leq
    (1-\gamma)^tf(\zeta)+\frac{C}{\gamma}
    \qquad\qquad
    t\in\integer_+,
  \end{equation*}
  and the result follows.
  To prove \eqref{eq:gc} observe that
  \begin{equation*}
    Pf(\zeta)=
    \EE_\zeta(e^{\lambda\zeta(1)})=
    \exp\{{\lambda(\zeta-\ind(\zeta>0))+\rho(e^\lambda-1)}\}.
  \end{equation*}
  Thus
  \begin{equation*}
   Pf(\zeta)-f(\zeta)=
    \begin{cases}
          \exp\{{\rho(e^\lambda-1)}\}-1 & \text{if $\zeta=0$,}\\
          e^{\lambda\zeta}\big(\exp\{{\rho(e^\lambda-1) -\lambda}\}-1\big) & \text{if $\zeta>0$.}
    \end{cases}
  \end{equation*}
  For any $\rho$ we can find $\lambda_\rho$ such that $\rho(e^\lambda-1) <\lambda$  for any $\lambda\leq\lambda_\rho$.
  Then choosing $C=\exp\{{\rho(e^{\lambda_\rho}-1)}\}$ and $\gamma=1-\exp\{{\rho(e^{\lambda_\rho}-1) -\lambda_\rho}\}$ equation \eqref{eq:gc} follows.
\end{proof}

The following result gives the long time behavior of the distribution of the nonlinear process.
We think that the hypothesis $r<1$ is technical.
It is needed as we prove uniform integrability of the nonlinear process by coupling it with a M/D/1 queue with arrival rate $\rho=r$.

\begin{teo}
  \label{teo:st}
  Assume that $\EE(\eta(0))=r\in[0,1)$ and $\EE(e^{\lambda\eta(0)})<+\infty$ for some $\lambda>0$, then $\eta(t)\Rightarrow\pi_{\rho_r}$ as $t\to+\infty$, where $\rho_r$ has been defined in Theorem~\ref{teo:md1}.
\end{teo}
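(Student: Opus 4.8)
The plan is to combine a monotone coupling that produces tightness with a closed one-step identity for the second moment, which plays the role of a Lyapunov relation, and then to compare with the M/D/1 queue at the limiting rate. I would first control the occupation probability and the tails. Since $\eta(t)\in\integer_+$ one has $\eta(t)\geq\ind(\eta(t)>0)$, so by Lemma~\ref{le:r} and Markov's inequality $\rho(t)=\PP(\eta(t)>0)\leq\EE(\eta(t))=r<1$ for every $t$. Because $x\mapsto x-\ind(x>0)$ is nondecreasing on $\integer_+$ and $\mathrm{Poisson}(\rho(t))$ is stochastically dominated by $\mathrm{Poisson}(r)$, I would realize on a common probability space a monotone coupling of \eqref{eq:nl1} with the M/D/1 queue $(\zeta(t))$ of rate $r$ started at $\eta(0)$, so that $\eta(t)\leq\zeta(t)$ for all $t$. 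Feeding $\EE(e^{\lambda\eta(0)})<+\infty$ (for $\lambda$ small enough) into Lemma~\ref{lem:ui} gives $\sup_t\EE(e^{\lambda\eta(t)})\leq C_r\,\EE(e^{\lambda\eta(0)})<+\infty$. This uniform exponential bound yields tightness of $\{\mathrm{Law}(\eta(t))\}_t$ and uniform integrability of $\eta(t)$ and of $\eta(t)^2$, so that means and second moments pass to any weak limit.

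The core is a closed identity for $S(t):=\EE(\eta(t)^2)$. Using \eqref{eq:nl1}, independence of $N_{t+1}\sim\mathrm{Poisson}(\rho(t))$, and $\EE(\eta(t))=r$, a direct computation gives
\[
  S(t+1)-S(t)=g(\rho(t)),\qquad g(\rho):=-\rho^2+2(1+r)\rho-2r=-(\rho-\rho_r)(\rho-\rho_r'),
\]
where $\rho_r=1+r-\sqrt{1+r^2}$ is the quantity of Theorem~\ref{teo:md1} and $\rho_r'=1+r+\sqrt{1+r^2}>1\geq r\geq\rho(t)$. Hence on the admissible range $[0,r]$ the function $g$ has its unique zero at $\rho_r$, with $g>0$ for $\rho>\rho_r$ and $g<0$ for $\rho<\rho_r$. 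Since $S(t)$ is bounded, the telescoped sum $\sum_{s<t}g(\rho(s))=S(t)-S(0)$ stays bounded; were $\rho(t)$ eventually bounded away from $\rho_r$ on one side, $g(\rho(t))$ would keep a fixed sign bounded away from $0$ and force $S(t)\to\pm\infty$. This already yields $\liminf_t\rho(t)\leq\rho_r\leq\limsup_t\rho(t)$.

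Upgrading this to $\rho(t)\to\rho_r$ is the step I expect to be the main obstacle, as it amounts to excluding sustained oscillation of $\rho(t)$ across $\rho_r$. I would work on the $\omega$-limit set $\Omega$ of the laws: the one-step map $\Phi$ sending $\mathrm{Law}(\eta(t))$ to $\mathrm{Law}(\eta(t+1))$ is continuous for weak convergence on $\integer_+$ (weak convergence being pointwise convergence of the mass functions, with $\mu\mapsto1-\mu(\{0\})$ continuous on the tight orbit), so $\Omega$ is nonempty, compact, connected and satisfies $\Phi(\Omega)=\Omega$. On $\Omega$ every measure has mean $r$, so the identity reads $V(\Phi\mu)-V(\mu)=g(\rho_\mu)$ with $V(\mu):=\EE_\mu(\eta^2)$ and $\rho_\mu:=1-\mu(\{0\})$. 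The aim is to show $g\equiv0$ on $\Omega$, i.e. $\rho_\mu\equiv\rho_r$: then $S(t)$ converges, whence $g(\rho(t))=S(t+1)-S(t)\to0$ and, $\rho_r$ being the only zero of $g$ in $[0,r]$, $\rho(t)\to\rho_r$. The cleanest route, when available, is to prove that $\rho(t)$ is monotone, so that $g(\rho(t))$ keeps a constant sign and the bounded, monotone sequence $S(t)$ converges; failing that, one runs a LaSalle-type analysis of $V$ on the invariant set $\Omega$. This is where the genuine work lies, since $V$ is not monotone along the whole trajectory and the naive invariance argument only controls partial sums of $g(\rho(t))$.

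Finally, once $\rho(t)\to\rho_r<1$, I would identify the limit by comparison with the homogeneous M/D/1 queue of rate $\rho_r$. Since the total variation distance between $\mathrm{Poisson}(\rho(t))$ and $\mathrm{Poisson}(\rho_r)$ is at most $|\rho(t)-\rho_r|\to0$, one couples the increments of $\eta$ with those of an M/D/1$(\rho_r)$ chain so that they agree with probability tending to $1$; together with the geometric ergodicity furnished by the drift estimate of Lemma~\ref{lem:ui} (uniform for rates near $\rho_r<1$) this gives $\mathrm{Law}(\eta(t))\to\pi_{\rho_r}$ in total variation, hence $\eta(t)\Rightarrow\pi_{\rho_r}$. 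Alternatively, since the evolution is asymptotically the homogeneous M/D/1$(\rho_r)$ chain, every weak subsequential limit $\mu$ has mean $r$ (uniform integrability and Lemma~\ref{le:r}) and is invariant for that chain, so $\mu=\pi_{\rho_r}$ by the uniqueness statement and the relation $\sum_k k\,\pi_{\rho_r}(\{k\})=r$ of Theorem~\ref{teo:md1}; as the subsequential limit is then unique, $\eta(t)\Rightarrow\pi_{\rho_r}$.
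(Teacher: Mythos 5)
Your proposal has a genuine, and indeed self-acknowledged, gap at the decisive step. The preparatory work is sound and matches the paper: the bound $\rho(t)\le r<1$, the thinning coupling with the M/D/1 queue of rate $r$, and Lemma~\ref{lem:ui} giving $\sup_t\EE(e^{\lambda\eta(t)})<+\infty$, hence tightness and uniform integrability, are exactly the ingredients the paper uses. Your second-moment identity $S(t+1)-S(t)=g(\rho(t))$ with $g(\rho)=-\rho^2+2(1+r)\rho-2r$ is also correct (it is the same quadratic that appears in the paper's closing Remark). But this identity only controls the partial sums $\sum_{s<t}g(\rho(s))=S(t)-S(0)$, which, as you say, yields no more than $\liminf_t\rho(t)\le\rho_r\le\limsup_t\rho(t)$; it cannot exclude persistent oscillation of $\rho(t)$ across $\rho_r$ with $g(\rho(t))\not\to0$. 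You have no monotonicity of $\rho(t)$ and no genuine Lyapunov function on the $\omega$-limit set, and your LaSalle sketch does not close this: an invariant set on which $V(\Phi\mu)-V(\mu)=g(\rho_\mu)$ need not have $g\equiv0$ pointwise unless you can control $V$ along entire orbits in $\Omega$, which is exactly the missing argument. Since your final identification step is conditional on $\rho(t)\to\rho_r$, the proof as written does not go through.

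The paper circumvents this entirely by never trying to prove convergence of $\rho(t)$ along the trajectory. Instead it passes the one-step characteristic-function recursion
\begin{equation*}
  \EE\bigl(e^{ix\eta(t+1)}\bigr)=\exp\bigl\{\rho(t)(e^{ix}-1)\bigr\}\,\EE\bigl(e^{ix(\eta(t)-\ind(\eta(t)>0))}\bigr)
\end{equation*}
to a weak subsequential limit $\bar\eta$, obtaining a \emph{closed, solvable} fixed-point equation for $\EE(e^{ix\bar\eta})$ whose unique solution is the characteristic function \eqref{eq:cf1} of $\pi_{\bar\rho}$. Thus every limit point lies in the one-parameter family $\{\pi_\rho:\rho\in[0,1)\}$, and the parameter is then pinned down by the conserved mean: uniform integrability (which you have already established) gives $\EE(\bar\eta)=r$, and by Theorem~\ref{teo:md1} the map $\rho\mapsto\sum_kk\pi_\rho(\{k\})$ selects $\bar\rho=\rho_r$. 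I suggest you keep your coupling and uniform-integrability work, discard the attempt to prove $\rho(t)\to\rho_r$ directly, and replace it with this characterization of the subsequential limits; your quadratic $g$ then reappears harmlessly as the equation determining $\rho_r$ from $r$, rather than as a drift condition you must propagate along the whole orbit.
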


\begin{proof}
  We first observe that by Lemma~\ref{le:r} we have $\EE(\eta(t))=r$ for any $t\geq0$ and this, via Markov inequality, implies the tightness of the sequence of distributions of $(\eta(t))_{t\in\integer_+}$.
  Furthermore, by equation \eqref{eq:nl1} we have, for any $x\in\real$,
    \begin{equation*}
      \begin{split}
            \EE\left(e^{ix\eta(t+1)}\right)&=
    \EE\left[\EE\left(e^{ix\eta(t+1)}|\eta(t)\right)\right]
    =\EE\left[e^{ix(\eta(t)-\ind(\eta(t)>0))}\EE\left(e^{ix N_{t+1}}|\eta(t)\right)\right]\\
       &=\exp\left\{\rho(t)(e^{ix}-1)\right\} \EE\left(e^{ix(\eta(t)-\ind(\eta(t)>0))}\right).
      \end{split}
  \end{equation*}
  By tightness we can choose a subsequence $(\eta(\bar t))_{\bar t\in\integer_+}$ of $(\eta(t))_{t\in\integer_+}$ with weak limit point $\bar\eta$.
  Taking the limit, for $\bar t\to+\infty$, in the previous equation we get:
  \begin{equation}
    \label{eq:cf}
    \EE\left(e^{ix\bar\eta}\right)=
    \exp\left\{\bar\rho(e^{ix}-1)\right\} \EE\left(e^{ix(\bar\eta-\ind(\bar\eta>0))}\right),
  \end{equation}
  where $\bar\rho=\lim_{\bar t\to+\infty}\rho(t)$.
  Observe that
  \begin{equation*}
    \begin{split}
          \EE\left(e^{ix (\bar\eta-\ind(\bar\eta>0))}\right)&=
    \EE\left(e^{ix \bar\eta},\bar\eta=0\right)+ e^{-ix}\EE\left(e^{ix \bar\eta},\bar\eta>0\right)\\
    &=1-\bar\rho+e^{-ix}\left[\EE\left(e^{ix \bar\eta}\right)-(1-\bar\rho)\right].
    \end{split}
  \end{equation*}
  Plugging this expression in the right hand side of equation \eqref{eq:cf} and solving it we get
  \begin{equation*}
    \EE\left(e^{ix \bar\eta}\right)
    =\frac{(1-\bar\rho)(e^{ix}-1)\exp\left\{\bar\rho (e^{ix}-1)\right\}}{e^{ix}-\exp\left\{\bar\rho (e^{ix}-1)\right\}}.
  \end{equation*}
  Thus, by Theorem~\ref{teo:md1}, $\bar\eta\sim\pi_{\bar\rho}$ and the limit points of the distributions of $(\eta(t))_{t\in\integer_+}$ belong to $\{\pi_\rho:\rho\in[0,1)\}$.
  To identify a unique limit point we will show that
  \begin{equation}
    \label{eq:ui}
    \EE(\bar \eta) =\lim_{\bar t\to+\infty}\EE(\eta(\bar t))=r,
  \end{equation}
  by proving uniform integrability of the sequence $(\eta(t))_{t\in\integer_+}$ (see for example Theorem 25.11 of \cite{Bi}).
  In fact the nonlinear process $(\eta(t))_{t\in\integer_+}$ can be coupled with a M/D/1 queue $(\zeta(t))_{t\in\integer_+}$ with arrival rate $r$, so that $\PP(\eta(t)\leq \zeta(t))=1$ for any $t\geq0$, and uniform integrability of $(\eta(t))_{t\in\integer_+}$ will follow by uniform integrability of $(\zeta(t))_{t\in\integer_+}$.

  First observe that
  \begin{equation*}
    \rho(t)=
    \PP(\eta(t)>0)\leq
    \EE(\eta(t))=r
  \end{equation*}
  and take $\zeta(0)=\eta(0)$.
  For any $t>0$ take a sequence of \iid Bernoulli random variables $Y_{1\, t},Y_{2\, t},\dots$ with parameter $\rho(t)/r$ such that sequences with different $t$ are independent and independent from $M_{t+1}$ in \eqref{eq:md1}.
 Now choose $N_{t+1}$ in \eqref{eq:nl1} as a thinning of $M_{t+1}$:
 \begin{equation*}
   N_{t+1}:=
   \sum_{k=1}^{M_{t+1}}Y_{k\,t}.
 \end{equation*}
 This implies $N_{t+1}\leq M_{t+1}$ \as for any $t\geq0$ so that $\eta(t)\leq\zeta(t)$ \as for any $t>0$.
 To obtain uniform integrability of the nonlinear process observe that by Lemma~\ref{lem:ui},  taking $\lambda>0$ small enough,
 \begin{equation*}
   \EE(e^{\lambda\eta(t)})
   \leq\sum_\eta\PP(\eta(0)=\eta) \EE_\eta(e^{\lambda\zeta(t)})
   \leq C_r \EE(e^{\lambda\eta(0)})<+\infty.
 \end{equation*}
\end{proof}

\section{Propagation of chaos for the RBB process}
\label{sec:cp}

We present here the main result of this paper.
Consider a \emph{chaotic} initial state for the RBB process, \ie an initial state sequence $\eta^L(0)$ with symmetric distribution such that the components become independent as $L\to+\infty$.
We will show that the RBB process preserves this property at each time $t$ and if we look at the time evolution of $\eta^L(t)$ for $t\in[0,T]$, in the limit $L\to+\infty$, it behaves as the product of independent copies of the nonlinear process defined in Section~\ref{sec:nlp}.
This property is known as propagation of chaos for the distribution of $\eta^L$ \cite{Sz}.
More precisely

\begin{teo}
  \label{teo:mai}
  Assume that for any $L\in\natural$ the initial state sequence $\eta^L(0)$ is finitely exchangeable and that for some probability measure $\mu$ on $\integer_+$
  \begin{equation*}
   \lim_{L\to+\infty} \PP(\eta_1^L(0)=\xi_1,\dots,\eta_n^L(0)=\xi_n)=
    \prod_{k=1}^n\mu(\{\xi_k\}),
  \end{equation*}
  for any $n\in\natural$ and $\xi_1,\xi_2,\dots,\xi_n\in\integer_+$.
  
  Then $\left(\eta^L(t)\right)_{t\in\integer_+}$ is finitely exchangeable and for any $T\in\natural$ and for all bounded path functionals $\Phi_{k\,T}$
  \begin{equation*}
    \Phi_{k\,T}(\eta^L):=\Phi_{k\,T}(\eta_k^L(0),\eta_k^L(1),\dots,\eta_k^L(T)),
    \qquad
    k=1,\dots,n
  \end{equation*}
  we have
  \begin{equation}
    \label{eq:mai1}
   \lim_{L\to+\infty} \EE\left[\prod_{k=1}^n\Phi_{k\,T}(\eta^L)\right]=
    \prod_{k=1}^n\EE\left[\Phi_{k\,T}(\eta)\right],
  \end{equation}
  where $\eta=(\eta(t))_{t\in\integer_+}$ is the nonlinear process defined in Section~\ref{sec:nlp} with initial state $\eta(0)\sim\mu$. 
\end{teo}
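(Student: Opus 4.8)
The plan is to prove the factorization \eqref{eq:mai1} by induction on the time horizon $T$, after reducing general bounded path functionals to products. Since the relevant state space is the discrete path space $\integer_+^{T+1}$, by linearity and a bounded-approximation argument it suffices to establish \eqref{eq:mai1} for product functionals $\Phi_{k\,T}(\eta^L)=\prod_{s=0}^T g_{k,s}(\eta_k^L(s))$ with each $g_{k,s}\colon\integer_+\to\real$ bounded; for these the left side of \eqref{eq:mai1} depends only on the joint law of the finite block $(\eta_k^L(s))_{1\le k\le n,\,0\le s\le T}$. I would therefore take as induction hypothesis $H(T)$ the statement that, for every $n$ and all bounded $g_{k,s}$,
\[\lim_{L\to+\infty}\EE\Big[\prod_{k=1}^n\prod_{s=0}^{T}g_{k,s}(\eta_k^L(s))\Big]=\prod_{k=1}^n\EE\Big[\prod_{s=0}^{T}g_{k,s}(\eta_k(s))\Big],\]
the base case $H(0)$ being exactly the chaoticity hypothesis. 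Exchangeability of $\eta^L(t)$ for every $t$ is immediate: the map $\eta\mapsto\eta-w(\eta)$ acts coordinatewise and the multinomial reassignment $B(\eta)$ is exchangeable, so the symmetry of the initial law is preserved by \eqref{eq:rbb}.

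For the inductive step $H(T)\Rightarrow H(T+1)$ I condition on $\mathcal F_T:=\sigma(\eta^L(0),\dots,\eta^L(T))$. The only fresh randomness in passing from $T$ to $T+1$ is the reassignment $B=B(\eta^L(T))$, a multinomial vector with $S:=L\bar w_L(\eta^L(T))=\sum_j\ind(\eta_j^L(T)>0)$ balls thrown uniformly into the $L$ bins, so that $\eta_k^L(T+1)=\eta_k^L(T)-w_k+B_k$ with $w_k:=\ind(\eta_k^L(T)>0)$. Two facts drive the argument. First, the non-empty-bin fraction concentrates: by exchangeability $\Var(\bar w_L)=\tfrac1L\Var(w_1)+\tfrac{L-1}{L}\Cov(w_1,w_2)$, and $H(T)$ applied with $g_{1,T}=g_{2,T}=\ind(\cdot>0)$ (all other $g\equiv1$) forces $\Cov(w_1,w_2)\to0$ while $\EE(\bar w_L)\to\PP(\eta(T)>0)=\rho(T)$; hence $\bar w_L\to\rho(T)$ in $L^2$, and in particular $S/L\to\rho(T)$ in probability. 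Second, conditionally on $S$ the law of the finite marginal $(B_1,\dots,B_n)$ converges in total variation to a product of Poisson laws of parameter $S/L$: this is the standard Poissonization estimate for a few coordinates of a multinomial with many equiprobable categories, and combined with $S/L\to\rho(T)$ it yields convergence to $\mathrm{Poi}(\rho(T))^{\otimes n}$.

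Combining these, set $h_k(\eta_k):=\EE_{N\sim\mathrm{Poi}(\rho(T))}\big[g_{k,T+1}(\eta_k-\ind(\eta_k>0)+N)\big]$, a bounded function of a single coordinate. Conditioning on $\mathcal F_T$ and using the total-variation bound above (with a $2(\max_{k,s}\|g_{k,s}\|_\infty)^n$ prefactor on the event where $S/L$ is far from $\rho(T)$, whose probability vanishes) I would show
\[\EE\Big|\EE\big[\textstyle\prod_{k=1}^n g_{k,T+1}(\eta_k^L(T+1))\,\big|\,\mathcal F_T\big]-\prod_{k=1}^n h_k(\eta_k^L(T))\Big|\longrightarrow0.\]
Multiplying by the bounded $\mathcal F_T$-measurable factor $\prod_{k}\prod_{s\le T}g_{k,s}(\eta_k^L(s))$ and absorbing $h_k$ into the time-$T$ test function via $\tilde g_{k,T}:=g_{k,T}\,h_k$, the left side of $H(T+1)$ is asymptotically equal to $\EE\big[\prod_{k=1}^n(\prod_{s=0}^{T-1}g_{k,s}(\eta_k^L(s)))\,\tilde g_{k,T}(\eta_k^L(T))\big]$, a horizon-$T$ product functional. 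Applying $H(T)$ gives the limit $\prod_{k=1}^n\EE\big[(\prod_{s=0}^{T-1}g_{k,s}(\eta_k(s)))\,\tilde g_{k,T}(\eta_k(T))\big]$, and since by \eqref{eq:nl1} the nonlinear transition sends $\eta_k(T)$ to $\eta_k(T)-\ind(\eta_k(T)>0)+N$ with $N\sim\mathrm{Poi}(\rho(T))$, we have $h_k(\eta_k(T))=\EE[g_{k,T+1}(\eta_k(T+1))\mid\eta_k(T)]$, so each factor equals $\EE[\prod_{s=0}^{T+1}g_{k,s}(\eta_k(s))]$. This is precisely $H(T+1)$ and closes the induction.

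The main obstacle is the second driving fact, namely a quantitative control of the dependence introduced by the simultaneous (multinomial) reassignment: one must show that fixing a finite number $n$ of bins decouples their incoming-ball counts into independent Poisson variables, with a total-variation error uniform in $L$ for bounded $S/L$. The delicate point is that the total $S$ itself depends on the tagged coordinates $1,\dots,n$ through $w_1,\dots,w_n$; since these contribute only $O(1)$ to $S\sim L\rho(T)$ the dependence is asymptotically negligible, but making the Poissonization estimate robust to it---for instance by realizing the multinomial as independent Poissons conditioned on their sum and paying only for removing the constraint on the $n$ tagged coordinates---is the technical heart of the argument. Once this decoupling lemma is in place together with the concentration $\bar w_L\to\rho(T)$, the induction proceeds as above, and the finite-exchangeability claim follows from the symmetry of \eqref{eq:rbb} noted in the first paragraph.
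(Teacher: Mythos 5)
Your proposal is correct and shares the paper's overall skeleton: induction on the time horizon, the same Chebyshev/decorrelation argument showing that $\bar w_L(T)$ concentrates at $\rho(T)$ (the paper's bound \eqref{eq:ind} is exactly your variance decomposition combined with the inductive hypothesis applied to the pair of occupancy indicators $\ind(\eta_1^L(T)>0),\ind(\eta_2^L(T)>0)$), and the absorption of the one-step conditional expectation into a horizon-$T$ test function. Where you genuinely diverge is in the technical vehicle. The paper works with joint characteristic functions throughout, so the decoupling of the multinomial reassignment costs nothing: the conditional characteristic function of the tagged coordinates of $B(\eta^L(T))$ is exactly $\bigl(1-\frac{1}{L}\sum_{k}(1-e^{ix_k})\bigr)^{L\bar w_L(T)}$ (equation \eqref{eq:fc}), and the elementary pointwise limit of $a_L^{\bar w_L(T)}$ to $\exp\{\rho(T)\sum_k(e^{ix_k}-1)\}$ replaces your Poissonization lemma entirely; the only estimates needed are that $|a_L^{\bar w_L}-a_L^{\rho^L}|$ is small on the concentration event and bounded off it. You instead work with general bounded product test functions and therefore need a multivariate total-variation Poisson approximation for $n$ coordinates of a multinomial, which you correctly flag as the technical heart; it does go through (conditionally on $B_1+\dots+B_n$ the tagged block is multinomial with uniform weights, which coincides with the conditional law of i.i.d.\ Poissons given their sum, so the total-variation distance reduces to a one-dimensional binomial-versus-Poisson comparison of order $n^2\bar w_L(T)/L$), but it is an extra lemma the paper never has to prove. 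The trade-off: your route delivers total-variation convergence of the finite-block laws directly, while the paper's characteristic-function route is shorter and keeps every limit an explicit computation; both are complete modulo the same routine reductions, and your handling of exchangeability matches the paper's.
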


\begin{proof}
  The exchangeability property follows from the fact that the evolution of the RBB process preserves symmetry at each time step, see equation \eqref{eq:rbb}.
  
  To prove equation \eqref{eq:mai1}, using characteristic functions properties (see for example \cite{Bi} \S 26), it is sufficient to show that
    \begin{equation}
      \label{eq:mai}
   \lim_{L\to+\infty} \EE\left[\exp\left\{i\sum_{s=0}^t\sum_{k=1}^nx_k(s)\eta_k^L(s)\right\}\right]=
    \prod_{k=1}^n\EE\left[\exp\left\{i\sum_{s=0}^tx_k(s)\eta(s)\right\}\right],
  \end{equation}
  for any $t\in\integer_+$ and any $x_k(s)\in\real$, $k=1,\dots,n$, $s=0,\dots,t$.
  We proceed inductively.
  
  For $t=0$ \eqref{eq:mai} holds because by hypotesis $\eta(0)\sim\mu$. 
  We assume \eqref{eq:mai} for some $t$ and we prove it holds for $t+1$.
  Using conditional expectation we have:
  \begin{equation*}
    \begin{split}
         &\EE\left[\exp\Big\{i\sum_{s=0}^{t+1}\sum_{k=1}^nx_k(s)\eta_k^L(s)\Big\}\right]\\
        &=\EE\left[\exp\Big\{i\sum_{s=0}^{t}\sum_{k=1}^nx_k(s)\eta_k^L(s)\Big\}\EE\left(e^{i\sum_{k=1}^nx_k(t+1)\eta_k^L(t+1)}\Big|\,\eta^L(s):s\leq t\right)\right].
    \end{split}
  \end{equation*}
  Define
    \begin{equation*}
    \bar w_L(t)
    :=\frac{1}{L}\sum_{k=1}^L\ind(\eta_k^L(t)>0).
  \end{equation*}
  Using Markov property, equation \eqref{eq:rbb} and the expression of the characteristic function of multinomial distribution we obtain
  \begin{equation}
    \label{eq:fc}
    \begin{split}
           &\EE\left(e^{i\sum_{k=1}^n x_k(t+1)\eta_k^L(t+1)}\Big|\,\eta^L(s):s\leq t\right)\\
           &=e^{i\sum_{k=1}^n x_k(t+1)(\eta_k^L(t)-\ind(\eta_k^L(t)>0))}
\big(1-\frac{1}{L}\sum_{k=1}^n(1-e^{i x_k(t+1)})\big)^{L\bar w_L(t)}
    \end{split}
  \end{equation}
  Define
  \begin{equation}
    \label{eq:psi}
      \Psi_{t}(\eta^L)
      :=\exp\Big\{i\sum_{s=0}^{t}\sum_{k=1}^n x_k(s)\eta_k^L(s)+i\sum_{k=1}^n x_k(t+1)(\eta_k^L(t)-\ind(\eta_k^L(t)>0))\Big\},
  \end{equation}
  \begin{equation*}
    a_L:=\Big(1-\frac{1}{L}\sum_{k=1}^n(1-e^{i x_k(t+1)})\Big)^L,
  \end{equation*}
  and $\rho^L(t):=\EE(\bar w_L(t))$.
  We observe that, from exchangeability and inductive hypothesis at time $t$
  \begin{equation*}
    \lim_{L\to+\infty}\rho^L(t)
    =\rho(t)
    =\PP(\eta(t)>0).
  \end{equation*}
  Furthermore
  \begin{equation*}
        \lim_{L\to+\infty} a_L=\exp\Big\{\sum_{k=1}^n(e^{i x_k(t+1)}-1)\Big\}:=a.
  \end{equation*}
  Then
  \begin{multline*}
        \EE\left[\exp\Big\{i\sum_{s=0}^{t+1}\sum_{k=1}^n x_k(s)\eta_k^L(s)\Big\}\right]=
        \EE\left[\Psi_{t}(\eta^L)a_L^{\bar w_L(t)}\right]\\
        =\EE\left[\Psi_{t}(\eta^L)(a_L^{\bar w_L(t)}-a^{\rho^L(t)})\right]+\EE\left[\Psi_{t}(\eta^L)\right](a^{\rho^L(t)}-a^{\rho(t)})+\EE\left[\Psi_{t}(\eta^L)\right]a^{\rho(t)}.
  \end{multline*}
  The second term above goes to zero as $L\to+\infty$.
  By the inductive hypothesis applied to $\Psi_t$ we obtain for the third term 
  \begin{multline*}
    \lim_{L\to+\infty}\EE\left[\Psi_{t}(\eta^L)\right]a^{\rho(t)}\\
    =\prod_{k=1}^n\EE\Big[\exp\Big\{i\sum_{s=0}^{t} x_k(s)\eta(s)+i x_k(t+1)(\eta(t)-\ind(\eta(t)>0))\Big\}\Big]\\
    \times e^{\rho(t)(e^{i x_k(t+1)}-1)}.
  \end{multline*}
  So the result follows showing that the first term goes to zero as $L\to+\infty$.
  Observe that
  \begin{equation*}
    \Big|\EE\left[\Psi_{t}(\eta^L)(a_L^{\bar w_L(t)}-a^{\rho^L(t)})\right]\Big|
    \leq\EE\left[\big|a_L^{\bar w_L(t)}-a_L^{\rho^L(t)}\big|\right]+\big|a_L^{\rho^L(t)}-a^{\rho^L(t)}\big|.
  \end{equation*}
  The second term above goes to zero as $L\to+\infty$, while for the first one fix $\delta>0$  we have
  \begin{equation*}
    \begin{split}
          \EE\left[\big|a_L^{\bar w_L(t)}-a_L^{\rho^L(t)}\big|\right]&\leq
    \EE\left[\big|a_L^{\bar w_L(t)}-a_L^{\rho^L(t)}\big|,\big|\bar w_L(t) -\rho^L(t)\big|\leq\delta\right]\\
        &+\EE\left[\big|a_L^{\bar w_L(t)}-a_L^{\rho^L(t)}\big|,\big|\bar w_L(t) -\rho^L(t)\big|>\delta\right].
    \end{split}
  \end{equation*}
  By Taylor expansion the first term above can be bounded by a constant, depending only on $x_k(t+1)$, $k=1,\dots,n$, times $\delta$;
  the second term can be bounded by
  \begin{equation*}
    \EE\left[\big|a_L^{\bar w_L(t)}-a_L^{\rho^L(t)}\big|,\big|\bar w_L(t) -\rho^L(t)\big|>\delta\right]\leq
    C \PP(|\bar w_L(t) -\rho^L(t)|>\delta),
  \end{equation*}
  where $C$ is a positive constant depending only on $x_k(t+1)$, $k=1,\dots,n$.
  By using Chebyshev inequality we have that
  \begin{equation}
    \label{eq:ind}
    \PP(|\bar w_L(t) -\rho^L(t)|>\delta)\leq
    \frac{1}{\delta^2}\Big(\frac{1}{4 L}+\PP(\eta_1^L(t)>0,\eta_2^L(t)>0)-\rho^L(t)^2\Big).
  \end{equation}
  Observe that by inductive hypothesis 
  \begin{equation*}
    \lim_{L\to+\infty}\left[\PP(\eta_1^L(t)>0,\eta_2^L(t)>0)-\rho^L(t)^2\right]
    =\PP(\eta(t)>0)^2-\rho(t)^2=0.
  \end{equation*}
  Then, fixed $\epsilon>0$ we can choose $\delta>0$ small enough so that the first term is smaller than $\epsilon/2$ for any $L$ and choosing $L$ large enough the result follows.
\end{proof}

\begin{re}
  The previous result describes the propagation of chaos for the RBB process in finite time intervals.
  Unfortunately this is not enough to prove that, as naturally expected, the same behavior holds in the infinite time limit.
  However consider the RBB process with $L$ bins and $N:=rL$ particles and let $\nu_L^N$ be its stationary measure.
  We can show that if $\nu_L^N$ is chaotic then it is $\pi_{\rho_r}$-chaotic.
  In fact, using \eqref{eq:fc} and stationarity we have
  \begin{equation}
    \label{eq:ms1}
    \EE\Big[e^{i\sum_{k=1}^n x_k\eta_k^L}\Big]
    =    \EE\Big[e^{i\sum_{k=1}^n x_k(\eta_k^L-w_k(\eta^L))}\Big(1-\frac{1}{L}\sum_{k=1}^n(1-e^{ix_k}\Big)^{L\bar w_L}\Big]
  \end{equation}
  for any $x_1,\dots,x_n\in\real$.
   Let $\tilde\eta$ be a weak limit point of $\eta^L$ and, passing to a subsequence, assume that $\eta^L\Rightarrow\tilde\eta$.
  By chaoticity we can follow the same lines of the proof of Theorem~\ref{teo:mai}, take the limit as $L\to+\infty$ in \eqref{eq:ms1} and
    \begin{equation*}
    \EE\Big[e^{i\sum_{k=1}^n x_k\tilde\eta_k}\Big]
    = \EE\Big[e^{i\sum_{k=1}^n x_k(\tilde\eta_k-w_k(\tilde\eta))}\Big]\exp\Big\{\tilde\rho\sum_{k=1}^n(e^{ix_k}-1)\Big\},
  \end{equation*}
  where $\tilde\rho=\PP(\tilde\eta_1>0)$.
  This means that the distribution of $\tilde\eta$ is invariant under the evolution of a product of infinite independent copies of the M/D/1 queue with intensity $\tilde\rho$ and $\tilde\eta\sim\pi_{\tilde\rho}^{\otimes \infty}$.
  To conclude we have to show that $\tilde\rho=\rho_r$.
  
  Differentiating twice \eqref{eq:ms1} and using the fact that $\eta_1^L+\dots+\eta_L^L=rL$, an explicit computation yields
  \begin{equation*}
    \Cov(w_1(\eta^L),w_2(\eta^L))
    =-\EE(w_1(\eta^L))^2+2 \EE(w_1(\eta^L))\frac{(r+1)L-1}{L-1}-2r\frac{L}{L-1}.
  \end{equation*}
  Taking the limit as $L\to+\infty$ in this equation we obtain
    \begin{equation*}
    0=\Cov(w_1(\tilde\eta),w_2(\tilde\eta))
    =-\tilde\rho^2+2\tilde\rho(r+1)-2r,
  \end{equation*}
  which, by Theorem~\ref{teo:md1}, implies $\tilde\rho=\rho_r$.

  Observe that the same lines can be followed if in \eqref{eq:ind} chaoticity of $\nu_L^N$ is replaced by the negative association property $\PP(\eta^L_1>0|\eta^L_2>0)\leq \PP(\eta^L_1>0)$.
\end{re}

\subsection*{Acknowledgements}
We thank Lorenzo Bertini for friendly discussions.
This work has been supported by the PRIN 20155PAWZB ``Large Scale Random Structures''.

\end{document}